\documentclass[a4paper]{amsart}

\usepackage{amssymb,comment}
\usepackage{amsfonts}
\usepackage{amsmath}
\usepackage{amsthm,graphicx}

\newcommand{\p}{{\mathbb P}}
\newcommand{\e}{{\mathbb E}}
\newcommand{\D}{{\mathrm d}}

\newcommand{\eqd}{\stackrel{\rm d}{=}}

\newcommand{\R}{\mathbb R}

\newcommand{\ii}{{\mathrm i}}

\newtheorem{thm}{Theorem}
\newtheorem{lemma}[thm]{Lemma}

\theoremstyle{remark}
\newtheorem{rem}{Remark}[section]

\theoremstyle{definition}

\begin{document}
		\title[Bivariate risk and queueing models with mutual assistance]{Stochastic decompositions in bivariate risk and queueing models with mutual assistance} 
		\author[J. Ivanovs]{Jevgenijs Ivanovs} 
		\address{Aarhus University}
\maketitle

\begin{abstract}
	We consider two bivariate models with two-way interactions in context of risk and queueing theory.
	The two entities interact with each other by providing assistance but otherwise evolve independently.
	We focus on certain random quantities underlying the joint survival probability and the joint stationary workload, and show that these admit stochastic decomposition. Each one can be seen as an independent sum of respective quantities for the two models with one-way interaction.
    Additionally, we discuss a rather general method of establishing decompositions from a given kernel equation by identifying two independent random variables from their difference, which may be useful for other models. Finally, we point out that the same decomposition is true for uncorrelated Brownian motion reflected to stay in an orthant, and it concerns the face measures appearing in the basic adjoint relationship. 
\end{abstract}

\keywords{Keywords: coupled processor, fluid network, reflection in orthant, kernel equation, basic adjoint relationship, stochastic decomposition, two-way interaction}

\section{Introduction}
Stochastic decomposition results abound in inventory management and queueing theory~\cite{polling,fuhrmann_cooper,ivanovs_kella}, but most of them concern models with vacations (switchover times) or a secondary jump input.
Here we present stochastic decomposition results of a different nature. 
We consider a bivariate model with two-way interaction and show that a certain fundamental quantity can be decomposed into two independent parts corresponding to models with one-way interactions. In fact, we do so in two frameworks: (i) a risk model of~\cite{boxma_ivanovs_risk} where each company covers the deficit of another and (ii) a queueing model with mutual assistance which can be seen as a coupled processor or a fluid network with two nodes.

Multivariate risk and queueing models are notoriously hard to analyze~\cite{cohen_boxma}. 
%At present there are only a few examples where a substantial progress has been made; much more is known with respect to asymptotic behavior. 
A classical example illustrating various difficulties is that of a Brownian motion reflected to stay in an orthant~\cite{dai_harrison,harrison_williams}.
In fact, our queueing model is a close relative. In Remark~\ref{rem:bm} we point out that our stochastic decomposition concerns the face measures appearing in the basic adjoint relationship. 
%: we replace Brownian motions by compound Poisson processes and assume that they evolve independently when away from the boundary.
The focus here is not on defining complex models and establishing their properties which is known to be highly technical in general~\cite{harrison_williams,reed_zwart}, but rather on discovering structural results for some basic well-understood models. 
Thus we exclusively consider the case where the drivers are independent compound Poisson processes, 
so that without interactions we have (i) a pair of Cram\'er-Lundberg risk processes and (ii) a pair of M/G/1 workload processes. %~\cite{ruin}.

The quantities of interest are the joint survival probability in (i) and the joint stationary workload in (ii). 
Our results are neater and easier to interpret in the former case and thus we mainly focus on the risk model. 
Furthermore, we assume that the second company starts with 0 capital, because the general case can be reduced in some sense to such boundary cases. %, see \S\ref{sec:kernel}.
Then the minimal initial capital of the first company leading to joint survival is the quantity admitting stochastic decomposition.
%Thus survival probability in a model with two-way interactions is the convolution of survival probabilities in two models with one-way interactions.
This result may be useful when numerically evaluating survival probabilities for different strengths of interactions.
In queueing model we consider the stationary workload in the first queue given that the second queue is empty. In fact, decomposition holds true for a slightly different distribution with increased point mass at~0.

The models are defined in \S\ref{sec:models} and the main results are stated in \S\ref{sec:results}.
\S\ref{sec:proofs} contains proofs based on the results of~\cite{boxma_ivanovs_risk} and~\cite{boxma_ivanovs_queue} for the risk and queueing model, respectively. In \S\ref{sec:direct} we attempt to establish our decompositions directly from the kernel equation, 
which requires identification of two independent random variables from their difference, see \S\ref{sec:characterization}.
This latter approach may be useful in finding stochastic decompositions in other models.

\section{The models}\label{sec:models}
Throughout this work we assume that $X_1(t)$ and $X_2(t)$ are two independent drifted compound Poisson processes of the form 
\[X_i(t)=c_i t-\sum_{k=1}^{N_i(t)} J_{i,k},\qquad t\geq 0,\]
where $c_i>0$ and $J_{i,k},k=1,2,\ldots$ are positive iid random variables independent of the Poisson process $N_i(t)$.
The respective means are denoted by $\mu_i=\e X_i(1)$ and the Laplace exponents are given by
\[\psi_i(s)=\log \e e^{sX_i(1)}=c_i s+\lambda_i(\e e^{-s J_{i,1}}-1),\]
where $\lambda_i$ is the rate of the Poisson process $N_i(t)$.
Note that $X_i(t)$ started in $x_i\geq 0$ is the classic Cram\'er-Lundberg model in ruin theory, whereas $-X_i(t)$ reflected at~$0$ is the workload process in M/G/1 queue~\cite{ruin}.

 Our bivariate coupled risk and queueing models are defined below in an iterative way using the independent processes $\pm X_i(t)$ as drivers of the two entities, whereas the constants $r_1,r_2\in[0,\infty]$ parameterize certain interaction between the two. 
 The interaction is of the type where one company/server helps the other and vice versa.
Our focus is on the joint survival probability in risk model and the joint stationary workload in queueing model.
It is noted that unlike classical models, where survival probability and stationary workload are closely related by time-reversal argument~\cite{ruin}, we have no simple duality between the two quantities of interest. Nevertheless, some structural similarities on the level of kernel equations exist, which motivated looking at both models simultaneously.

\subsection{Coupled risk processes}
Let $x_i\geq 0$ be the initial capital of the company~$i$. It is assumed that the capitals evolve according to $x_i+X_i(t)$ until the first time when at least one of these processes becomes negative.
Note that this happens because of a claim $J_{i,k}$ received by one of the companies, since $X_i$ can not jump at the same instant a.s.
Letting $(y_1,y_2)$ be the current state, we restart the bivariate process from
\[(x_1,x_2)=\begin{cases}(0,y_2+r_1y_1),&\text{if }y_1<0,\\ (y_1+r_2y_2,0),&\text{if }y_2<0,\end{cases}\]
unless $x_1<0$ or $x_2<0$, in which case the ruin is declared. 

In words, \emph{deficit of the company $i$ is instantaneously covered by the other company which pays $r_i$ for the unit of capital transferred}. 
A standard scenario assumes that $r_1,r_2>1$ (think of taxation or transaction costs), whereas we allow for arbitrary rates in $[0,\infty]$. 
In particular, $r_1=0$ means that the first company refills to 0 without participation of the second, and $r_1=\infty$ means that deficit in the first company causes ruin in our bivariate model. Hence the boundary values $0,\infty$ yield a simpler model with one-way interaction.
This is not to say that latter models are easy to analyze.

Finally, we write $\phi(x_1,x_2)$ for the probability of survival (no ruin) from the initial capitals $(x_1,x_2)$ on the infinite time interval $[0,\infty)$. That is, the probability that companies manage to save each other at all times. Consider the bivariate transform 
\begin{equation}\label{eq:F}F(s_1,s_2)=\iint_{\mathbb R^2_+} e^{-s_1 x_1-s_2 x_2}\phi(x_1,x_2)\D x_1\D x_2, \qquad s_1,s_2\geq 0\end{equation}
Throughout this work we assume that our risk model satisfies the natural safety loading assumption:
\begin{equation}\label{eq:stability}\mu_1,\mu_2> 0\quad \text{or}\quad\mu_1\leq 0,\mu_2+r_1\mu_1>0\quad \text{or}\quad\mu_2\leq 0,\mu_1+r_2\mu_2>0,\end{equation}
and that $\mu_i>0$ if $r_i=\infty$, see Figure~\ref{fig:domain}.
\begin{figure}[h!]\label{fig:domain}
\includegraphics[width=0.6\textwidth]{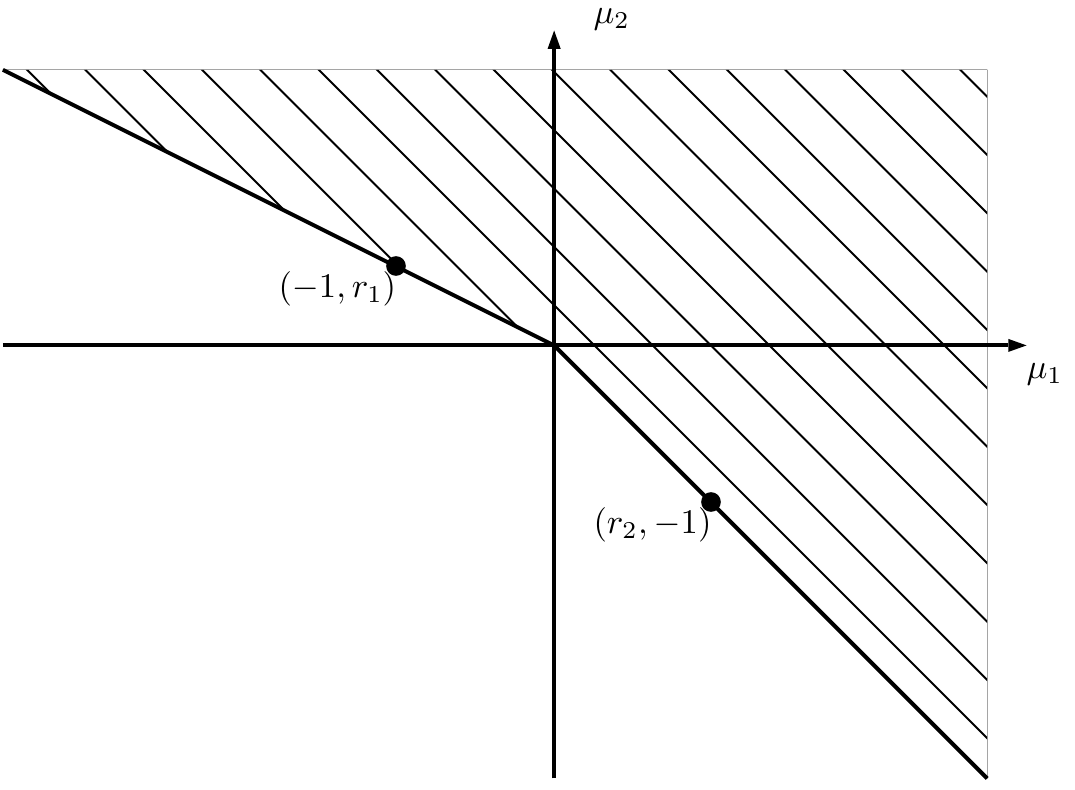}
\caption{Assumed parameter region. The case $r_1r_2=1$ corresponds to a linear boundary.}
\end{figure}
This guarantees that $\phi(x_1,\infty)=\phi(\infty,x_2)=1$ for $r_1,r_2\in [0,\infty)$, see~\cite{boxma_ivanovs_risk}; for $r_1=\infty$ we have $\phi(x_1,\infty)<1$ for any $x_1\in[0,\infty)$.

Importantly, there is a certain invariance under rescaling: for any $c>0$ the survival probability $\phi(x_1,x_2)$ in the original risk model is the same as the survival probability $\phi(x_1,cx_2)$ in the model $(X_1,cX_2,cr_1,r_2/c)$. In particular, the case of $r_1r_2=1$ can be reduced to the trivial case $r_1=r_2=1$, which is essentially one-dimensional: 
the risk problem reduces to the survival probability of the sum process.

\subsection{Coupled queueing processes}
We assume that workload processes evolve according to $Y_i(t)=-X_i(t)$ until at least one of these processes hits~0 (no work in the corresponding queue). That is, server $i$ works at speed $c_i$ and experiences customers bringing $J_{i,k}$ amount of work. After hitting the 0 the $i$-th workload process stays at~0 until arrival of the next customer to server~$i$, but the other server $j\neq i$ works at speed $c_j+c_i\rho_i$ during this time, where $\rho_i\in[0,\infty)$. One may view this model as a two-dimensional Skorokhod's reflection (see~\cite{kella_reflecting} and references therein), but only when $\rho_1\rho_2<1$, because otherwise the local times can not cancel each other when both queues are empty. 
In words, \emph{the server $i$, when idle, helps the other by providing 'proportion'~$\rho_i$ of his own service rate~$c_i$}. 

Our main quantity of interest is the pair of stationary workloads $(W_1,W_2)$, when it exists, and its bivariate transform
\begin{equation}\label{eq:G}G(s_1,s_2)=\e e^{-s_1 W_1-s_2 W_2}, \qquad s_1,s_2\geq 0.\end{equation}
The stability region is given by~\cite{cohen_boxma}
\begin{equation}\label{eq:stability_queue}\mu_1,\mu_2> 0\quad \text{or}\quad\mu_1\leq 0,\mu_1+\rho_2\mu_2>0\quad \text{or}\quad\mu_2\leq 0,\mu_2+\rho_1\mu_1>0,\end{equation}
which corresponds to the parameter region of the risk model by letting $r_i=1/\rho_j$ for $i\neq j$.
Note that the regime $\rho_1\rho_2<1$ is of the main interest and, moreover, it corresponds to a two-dimensional fluid network with an arbitrary legal routing matrix~\cite{boxma_ivanovs_queue}. 
%Unlike in the risk setting, we only succeeded to provide a stochastic decomposition result for the queueing model with $\rho_1\rho_2<1$ and $\mu_1,\mu_2>0$.

Finally observe that the stationary workload $(W_1,W_2)$ in the original queueing model has the same law as $(W_1,W_2/c)$ in the model $(X_1,cX_2,c\rho_1,\rho_2/c)$.
Unlike the risk model, the queueing model with $\rho_1\rho_2=1$ is not trivial, see~\cite{cohen_boxma} for an in-depth study of this case.

\section{Behavior at the boundary and stochastic decompositions}\label{sec:results}
The first step in the analysis of the transforms~\eqref{eq:F} and~\eqref{eq:G} is to derive the corresponding so-called kernel equations, which identify the bivariate transform of interest in terms of two univariate functions relating to the behavior of the system at the boundaries. 
These derivations while being tedious follow some standard reasoning: application of the infinitesimal generator to $\phi(u,v)$ in risk~\cite{boxma_ivanovs_risk}, and level-crossing~\cite{cohen_boxma} or martingale arguments~\cite{boxma_ivanovs_queue} in queueing.
A closely related basic adjoint relationship for the stationary distribution of Brownian motion reflected to stay in an orthant can be found in~\cite{dai_harrison,harrison_williams}; it is derived using tools from stochastic calculus.

Additional motivation is provided by the following perspective in case of the risk model.
Note that processes evolve independently and without knowledge of $r_i$ until one of the companies gets in trouble.
The latter is then restarted from 0 at the cost of the other company, and so the system is at the boundary at this instant.

\subsection{Kernel equations}\label{sec:kernel}
The risk equation reads (with $s_1,s_2\geq 0$) 
\begin{equation}\label{eq:kernel_risk}(\psi_1(s_1)+\psi_2(s_2))F(s_1,s_2)=\frac{\psi_2(s_2)-\psi_2(r_2s_1)}{s_2-r_2 s_1}F_1(s_1)+
\frac{\psi_1(s_1)-\psi_1(r_1s_2)}{s_1-r_1 s_2}F_2(s_2),\end{equation}
where \[F_1(s)=\int_0^\infty e^{-sx}\phi(x,0)\D x.\]
In the case of $r_1=\infty$ (or $r_2=\infty$) the kernel equation should be read in the limiting sense, where $\psi_i(\theta)\sim c_i\theta$ as $\theta\to\infty$.
%We write $F^{r_1,r_2}$ to stress the dependence on the rates.

The queueing equation for $\rho_1\rho_2\neq 1$ is
\begin{equation}\label{eq:kernel_queue}(\psi_1(s_1)+\psi_2(s_2))\e e^{-s_1 W_1-s_2W_2}
=(s_2-\rho_2s_1)G_1(s_1)+(s_1-\rho_1s_2)G_2(s_2),\end{equation}
where 
\[G_1(s)=c_2\e (e^{-s W_1};W_2=0)+\rho_1\frac{c_2\rho_2+c_1}{1-\rho_1\rho_2}\p(W_1=W_2=0).\] 
Note that by joining the two terms containing $\p(W_1=W_2=0)$ we may also state an equation for the case $\rho_1\rho_2=1$, but then the structure of that kernel equation is different and it does require another type of analysis, see~\cite{cohen_boxma}.

Importantly, the original problem reduces to the problem of identification of the boundary functions $F_i(s)$ and $G_i(s)$.
In \cite{boxma_ivanovs_queue} and \cite{boxma_ivanovs_risk} these functions where expressed through the Wiener-Hopf factors of some auxiliary two-sided L\'evy process, see Section~\ref{sec:WH} for the summary of the results. Here we establish certain stochastic decomposition results underlying these unknown functions, which is the main result of this work.
We exclusively focus on $F_1(s)$ and $G_1(s)$, since the treatment of the other functions is analogous.

\subsection{Stochastic decompositions in risk}\label{sec:dec_risk}
Integration by parts yields the following identity
\[\widehat F_1(s)=s F_1(s)=\int_{0-}^\infty e^{-s u}\D\phi(u,0)=\e e^{-s U},\]
where $U$ is a non-negative random variable with c.d.f.\ $\p(U\leq u)=\phi(u,0)$ for $u\geq 0$. 
Note that, indeed, $\phi(u,0),u\geq 0$ is an increasing function taking values in $[0,1]$. Importantly, $\phi(\infty,0)=1$ according to our assumptions and so the random variable $U$ is proper, unless $r_2=\infty$. Finally, the distribution $\phi(u,0)$ has an atom at~0, i.e.\ $\phi(0,0)>0$, which explains the left integration limit $0-$ above.

We define $U$ as a functional of the two sample paths $(X_1(t),X_2(t)),t\geq 0$: it is the minimal initial capital of the first company leading to survival when the second company starts at 0 (such a minimum is always achieved). Indeed, $\p(U\leq u)$ is the probability that there is survival in our model for the initial capitals $(u,0)$.
In the following we write $U_{r_1,r_2}$ to make the dependence on $r_i$ explicit, and note that all such random variables are defined on the original probability space.
Observe that by construction $U_{r_1,r_2}$ is non-decreasing in both $r_1$ and $r_2$.

A useful perspective is provided by Figure~\ref{fig:perspective}, where we depict the second path upside down and shift the ground level according to the model specification. In other words, the grey regions are scaled according to $r_1$ and $r_2$. 
One can think of starting the first company with very large initial capital and then reducing it until the paths touch, so that no further decrease is possible. This procedure yields~$U$ as the difference between the final starting points. Observe the complexity of the model: the sample paths of the resultant processes (as in the picture) may change dramatically with a change of the initial capital of the first company alone.
\begin{figure}[h!]\label{fig:perspective}
\includegraphics[width=0.4\textwidth]{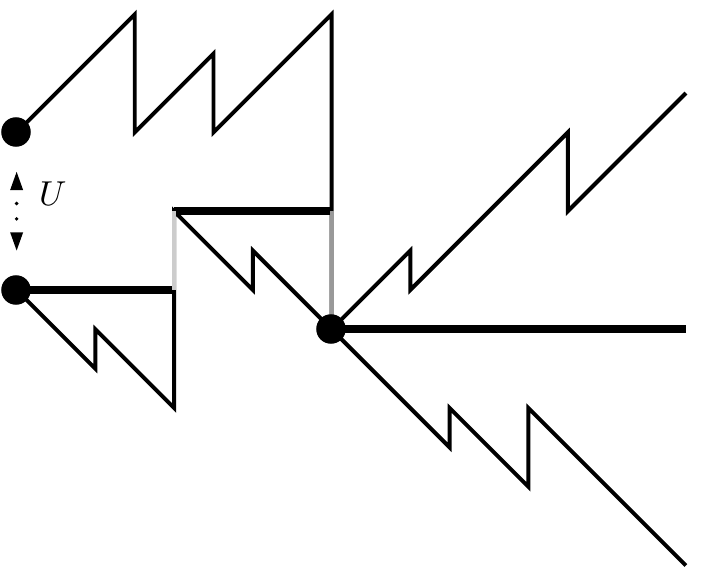}
\caption{Useful perspective: ground level is in bold, the rescaled regions are in grey.}
\end{figure}

Finally, we let $\underline X_i=\inf\{X_i(t):t\geq 0\}$ which is proper when $\mu_i>0$, and note that 
\[U_{\infty,0}=-\underline X_1.\] 
Moreover, $-\underline X_i$ has the distribution of the stationary workload in a single queue driven by $Y_i=-X_i$ process.
In particular, its transform is given by the generalized Pollaczek-Khinchine formula: $\e e^{\theta \underline X_i}=\mu_i\theta/\psi_i(\theta), \theta\geq 0.$

Let us now formulate our main result reducing the original problem with two-way interactions to two simpler problems with one-way interactions.
\begin{thm}\label{thm:dec_risk}
For $r_1,r_2\in(0,\infty)$ satisfying~\eqref{eq:stability} it holds that
\begin{equation}\label{eq:dec_main}U_{r_1,r_2}\eqd U_{r_1,0}+U'_{0,r_2},\end{equation}
where $U'$ denotes an independent copy of $U$. Moreover, we also have
\begin{align}
\label{eq:supp1} U_{r_1,0}&\eqd U_{r_1,\infty}|\{U_{0,\infty}=0\},&\mu_2>0,\\
\label{eq:supp2} U_{\infty,r_2}&\eqd U_{0,r_2}+U'_{\infty,0},&\mu_1>0.
\end{align}
\end{thm}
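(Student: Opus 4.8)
The plan is to extract, from the kernel equation \eqref{eq:kernel_risk}, a functional equation for the unknown transform $\widehat F_1(s)=\e e^{-sU_{r_1,r_2}}$ alone, and then to recognize the resulting identity as the statement that a certain random variable decomposes as an independent sum. First I would specialize the kernel equation to the curve on which the left-hand factor $\psi_1(s_1)+\psi_2(s_2)$ vanishes; this is the standard device for these bivariate models, and it removes the transform $F(s_1,s_2)$ of the bulk behavior, leaving a relation between $F_1$ and $F_2$ evaluated at points tied together through the kernel. Along this curve the two difference quotients $\frac{\psi_2(s_2)-\psi_2(r_2 s_1)}{s_2-r_2 s_1}$ and $\frac{\psi_1(s_1)-\psi_1(r_1 s_2)}{s_1-r_1 s_2}$ become the Laplace exponents of the auxiliary two-sided \levy process whose Wiener--Hopf factors were used in \cite{boxma_ivanovs_risk} to express $F_1,F_2$; I would substitute those explicit expressions and simplify.

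The heart of the matter is the characterization tool promised in \S\ref{sec:characterization}: if one knows the law of a difference $V-W$ of two \emph{independent} random variables, together with enough analytic side information (e.g.\ that $V$ and $W$ are nonnegative, or that one of them has a one-sided transform extending to a known domain), then $V$ and $W$ are determined. Here the kernel equation, after the substitutions above, presents $\psi_1(s_1)+\psi_2(s_2)=0$ as exactly such a difference-of-exponents relation, with the Wiener--Hopf factorization supplying the splitting into an "ascending" and a "descending" part. I would apply this to read off that $U_{r_1,r_2}$ has a transform that factors as $\e e^{-sU_{r_1,0}}\cdot\e e^{-sU_{0,r_2}}$: setting $r_2=0$ in the kernel kills the first term on the right of \eqref{eq:kernel_risk} and isolates the contribution governed by $r_1$ alone, and symmetrically setting $r_1=0$ isolates the $r_2$-contribution; the product structure of the Wiener--Hopf factors then gives \eqref{eq:dec_main}. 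For \eqref{eq:supp1} I would pass to the limit $r_2\to\infty$ in the $r_1$-part, using $\psi_i(\theta)\sim c_i\theta$ and the noted convention for the kernel, and interpret the conditioning $\{U_{0,\infty}=0\}$ — which by the identity $U_{\infty,0}=-\underline X_1$ and its mirror is the event that the second, unreflected path never forces the first below its starting level — as precisely the renormalization that turns $U_{r_1,\infty}$ into $U_{r_1,0}$; the event has positive probability exactly when $\mu_2>0$, which is why that hypothesis appears. Identity \eqref{eq:supp2} is the same computation with the roles of the two boundary behaviors exchanged: letting $r_1\to\infty$ makes ruin-on-the-first-company absorbing, so $U_{\infty,r_2}$ accumulates an independent copy of $U_{\infty,0}=-\underline X_1$ on top of the $r_2$-driven part $U_{0,r_2}$, and the condition $\mu_1>0$ is what makes $-\underline X_1$ proper.

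I expect the main obstacle to be the rigorous justification of the "identification from a difference" step: the kernel equation holds a priori only for real $s_1,s_2\ge 0$, so to run the Wiener--Hopf / analytic-continuation argument one must check that the relevant transforms extend analytically to the requisite strips or half-planes, that the zero set of $\psi_1(s_1)+\psi_2(s_2)$ is rich enough (a genuine curve, not a discrete set) to pin down the unknowns, and that no spurious entire-function ambiguity survives. The boundary cases $r_i=0$ and $r_i=\infty$, where the model degenerates and some random variables become improper, will each need a short separate limiting argument, and the bookkeeping of which atom at $0$ goes where (recall $\phi(0,0)>0$, and the remark that in the queueing analogue the decomposition holds "with increased point mass at $0$") is the kind of detail that is easy to get wrong. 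Everything else — substituting the explicit \levy exponents, collecting terms, taking the two limits — is routine once the identification principle of \S\ref{sec:characterization} is in hand.
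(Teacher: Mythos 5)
Your proposal blends two distinct routes that the paper keeps separate, and the blend does not quite hang together. The paper's actual proof of Theorem~\ref{thm:dec_risk} (\S\ref{sec:proofs}) does \emph{not} invoke the characterization-from-a-difference lemma at all: it takes the closed-form expression~\eqref{eq:FF} for $F_1$ in terms of the Wiener--Hopf factors $\Psi^+_r(\psi_1(s))$, computes the limiting forms of those factors as $r\downarrow 0$ and $r\uparrow\infty$ (the preliminary lemma giving~\eqref{eq:WH_limits}--\eqref{eq:limit}), writes down $\widehat F^{r_1,0}_1$ and $\widehat F^{0,r_2}_1$ explicitly, and checks the product identity $\widehat F^{0,r_2}_1\widehat F^{r_1,0}_1=\widehat F^{r_1,r_2}_1$ by direct algebra; the two supplementary identities are verified the same way. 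Once you have substituted the explicit formulas, as you say you would, there is nothing left for Lemma~\ref{lemma:char} to do. Conversely, if you want to run the kernel-equation-plus-characterization route of \S\ref{sec:direct}, you must drop the explicit formulas, and then, as the paper itself stresses, the argument is only a heuristic: the non-vanishing and density hypotheses of Lemma~\ref{lemma:char} are not free, and the paper's way of verifying the non-vanishing is again by appeal to~\eqref{eq:FF}, so the ``direct'' route does not actually bypass the Wiener--Hopf representation.

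There is also a concrete error in your sketch of why the $(r_1,0)$ and $(0,r_2)$ pieces separate. Setting $r_2=0$ does \emph{not} kill the first term on the right of~\eqref{eq:kernel_risk}: since $\psi_2(0)=0$, that term becomes $\frac{\psi_2(s_2)}{s_2}F_1(s_1)$, which is nonzero. The separation in the paper comes instead from the factorization of the prefactor $A_1^{r_1,r_2}$ (namely $A^{0,r_2}_1A^{r_1,0}_1=A^{r_1,r_2}_1\cdot\psi_2(s_2)/(s_1s_2)$ on the kernel curve), combined with the explicit Wiener--Hopf product structure of~\eqref{eq:FF}. Finally, your description of $\{U_{0,\infty}=0\}$ is garbled: it is the event $\{\underline X_2=0\}$, i.e.\ that the \emph{second} path never goes below its own starting level, which has probability $\mu_2/c_2>0$ precisely when $\mu_2>0$; this is what makes the defective $U_{r_1,\infty}$ proper after conditioning, as used in the paper's proof of~\eqref{eq:supp1}. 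In short, the overall intuition points in the right direction, but to turn it into a proof you need to commit to the explicit Wiener--Hopf expressions and verify the product identity, rather than route the argument through Lemma~\ref{lemma:char}.
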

This result will be proven later in \S\ref{sec:proofs}.
Let us briefly comment on the term $U_{r_1,\infty}|\{U_{0,\infty}=0\}$ in the case of $\mu_2>0$. As mentioned before, the random variable $U_{r_1,\infty}$ is not proper, but it does become such upon conditioning on $X_2$ never becoming negative.
Note that using~\eqref{eq:supp1} and~\eqref{eq:supp2} we may provide further decompositions. In particular, \eqref{eq:supp2} leads to
\begin{equation}U_{r_1,r_2}+U'_{\infty,0}\eqd U_{r_1,0}+U'_{\infty,r_2},\qquad \mu_1>0.\label{eq:supp} \end{equation}

Finally, observe that~\eqref{eq:dec_main} can be restated in terms of a convolution equation for survival probabilities
\[\phi_{r_1,r_2}(u,0)=\int_{0-}^u\phi_{0,r_2}(u-x,0)\D \phi_{r_1,0}(x,0)=\int_{0-}^u\phi_{r_1,0}(u-x,0)\D \phi_{0,r_2}(x,0).\]

\subsection{Law invariance in risk}\label{sec:law_inv}
Importantly, there is another decomposition
\begin{equation}\label{eq:dec_alt}U_{r_1,r_2}\stackrel{d}{=}U'_{0,r_2}+U_{r_1,r_2}| \{U_{0,r_2}=0\},\end{equation}
which is not of the type presented in Theorem~\ref{thm:dec_risk} as it contains $U_{r_1,r_2}$ on both sides. 
The proof is essentially given in Figure~\ref{fig:paths}. 
%We keep it at intuitive level, since~\eqref{eq:dec_alt} is also shown in~\S\ref{sec:proofs} using analytic arguments.
\begin{figure}[h!]\label{fig:paths}
\includegraphics[width=0.8\textwidth]{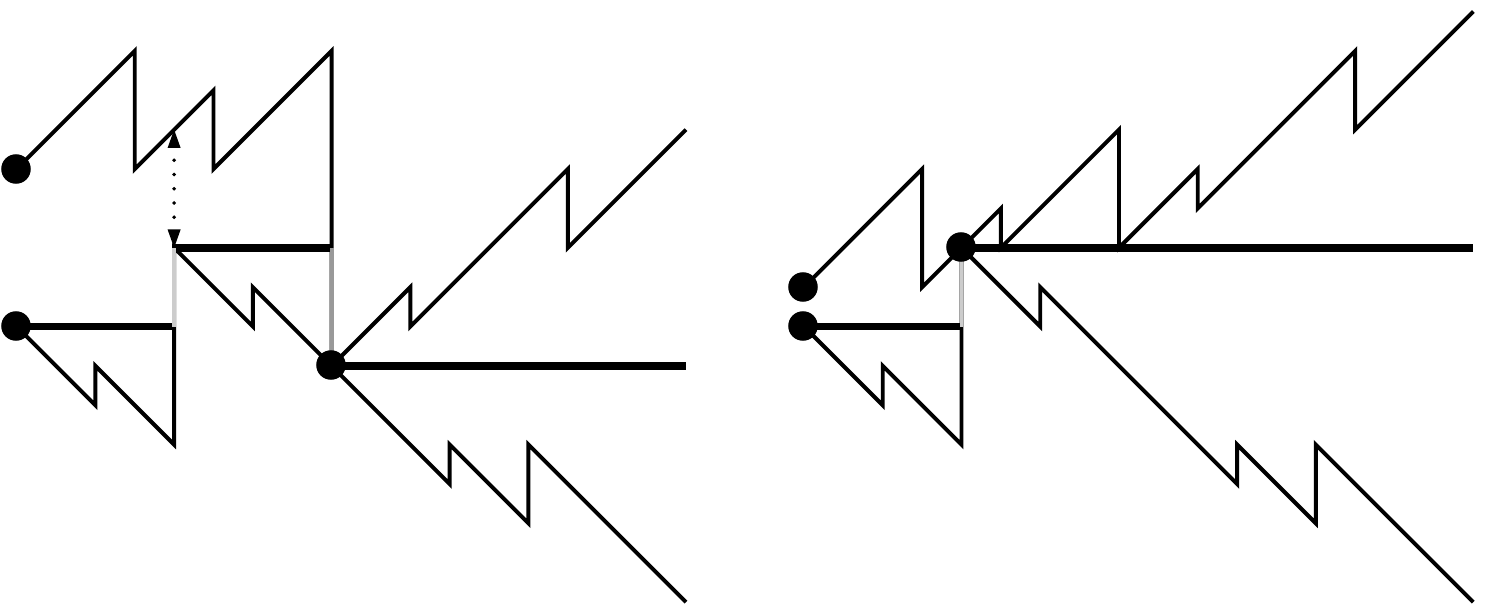}
\caption{Decomposition of another type}
\end{figure}
%Recall that $U_{0,r_2}$ corresponds to the model where the first process is reflected at 0 without participation of the second
Consider $U'_{0,r_2}$ (as in the right picture) and find the first time $T$ where the paths touch. It must be that both companies are at 0 at~$T$. The quantity $U'_{0,r_2}$ depends only on the evolution of paths up to time $T$ and the fact that post-$T$ paths yield $U_{0,r_2}=0$. Now splitting of the paths at $T$ yields the decomposition in~\eqref{eq:dec_alt}. 
Furthermore, this decomposition also holds for two-sided processes $X_i$, i.e., when jumps of both signs are present.
%We do require, however, that $X_1$ and $X_2$ do not jump at the same time, since otherwise the jump of $X_1$ at $T$ will also contribute. 
Finally, note that one can not interchange the roles of $r_i$ above.

%\begin{lemma}
%The distribution of $U_{r_1,r_2}| \{U_{0,r_2}=0\}$ is invariant in $r_2$ (satisfying the assumptions of ...). In particular, it coincides with the laws of $U_{r_1,0}$ and $U_{r_1,\infty}| \{\underline X_2=0\}$ when $\mu_2>0$. Furthermore, this law invariance is equivalent to the decomposition~\eqref{eq:dec_main} for any bivariate compound Poisson process $(X_1(t),X_2(t))$ with $c_1,c_2>0$ and possibly dependent jumps.
%\end{lemma}
Comparing~\eqref{eq:dec_alt} and~\eqref{eq:dec_main} we find that 
\begin{equation}\label{eq:lawinv}U_{r_1,r_2}| \{U_{0,r_2}=0\}\eqd U_{r_1,0}.\end{equation}
It is noted that $X+Y\eqd X+Z$ for independent real random variables does not imply $Y\eqd Z$~\cite[p.\ 479]{feller}. 
This implication is true, however, for non-negative random variables, since the Laplace transform $\e e^{-s X},s\geq 0$ can be 0 only at isolated points.
Identity~\eqref{eq:lawinv} states that $U_{r_1,r_2}| \{U_{0,r_2}=0\}$ is law-invariant in $r_2$. This also extends to the boundary cases: for $r_2=0$ we simply obtain $U_{r_1,0}$, and for $r_2=\infty$ (when $\mu_2>0$) we get $U_{r_1,\infty}| \{U_{0,\infty}=0\}$ which  has  the distribution of $U_{r_1,0}$ according to~\eqref{eq:supp1}.
%Finally, \eqref{eq:lawinv} was true in a more general model then so would be~\eqref{eq:dec_main}.

%Let us illustrate the identity~\eqref{eq:lawinv} using Monte Carlo simulation.

\subsection{Stochastic decomposition in queueing}
Here we assume that $\rho_1\rho_2<1$ and that $\mu_1,\mu_2>0$.
Similarly to the risk problem, we need to focus on a random quantity underlying $G_1$ in some sense. 
Note that the kernel equation implies $\mu_2=G_1(0)-\rho_1G_2(0)$ and $\mu_1=-\rho_2G_1(0)+G_2(0)$; to see this let $s_i=0$, divide both sides by $s_j$ and let it decrease to 0.
Hence $G_1(0)=(\mu_2+\rho_1\mu_1)/(1-\rho_1\rho_2)$ and thus
\begin{align}\nonumber &\widehat G_1(s):=G_1(s)/G_1(0)\\
&=\frac{c_2(1-\rho_1\rho_2)}{(\mu_2+\rho_1\mu_1)}\e (e^{-s W_1};W_2=0)+\rho_1\frac{c_2\rho_2+c_1}{\mu_2+\rho_1\mu_1}\p(W_1=W_2=0).\label{eq:transform}\end{align}
Since under our assumptions both terms are positive, we see that $\widehat G_1(s)=\e e^{-s V_{\rho_1,\rho_2}}$ is the transform of the mixture: $W_1|\{W_2=0\}$ and $0$ with obvious probabilities. Note that we put additional mass at 0 as compared to the distribution of $W_1|\{W_2=0\}$.

\begin{thm}\label{thm:dec_queue}
For $\rho_1\rho_2<1$ and $\mu_1,\mu_2>0$ it holds that 
\[V_{\rho_1,\rho_2}+V'_{0,0} \eqd V_{\rho_1,0}+V'_{0,\rho_2},\]
where $V_{\rho_1,\rho_2}$ is defined by the transform in~\eqref{eq:transform} and, in particular, $V_{0,0}$ has the stationary distribution of the stand-alone first queue.
\end{thm}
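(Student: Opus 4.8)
The plan is to pass to Laplace transforms and reduce the claim to an algebraic product identity, then read that identity off the explicit solution of the kernel equation~\eqref{eq:kernel_queue}. Since both sides of the asserted identity are sums of independent non-negative random variables, and — as already exploited around~\eqref{eq:lawinv} — the Laplace transform of a non-negative random variable on $[0,\infty)$ determines its law, the claim is equivalent to
\[
\e e^{-sV_{\rho_1,\rho_2}}\cdot\e e^{-sV_{0,0}}=\e e^{-sV_{\rho_1,0}}\cdot\e e^{-sV_{0,\rho_2}},\qquad s\ge 0,
\]
with each transform given by~\eqref{eq:transform} in the model with the indicated pair of rates. I would first pin down the three ``simple'' factors: for $\rho_1=\rho_2=0$ the two queues decouple, $V_{0,0}$ is the stand-alone M/G/1 workload $-\underline X_1$, and $\e e^{-sV_{0,0}}=\e e^{s\underline X_1}=\mu_1 s/\psi_1(s)$ by Pollaczek--Khinchine; for the pairs $(\rho_1,0)$ and $(0,\rho_2)$ the kernel equation loses all dependence on $\rho_2$, respectively $\rho_1$, so these are bona fide one-way-interaction models whose boundary transforms are the ``target'' building blocks.

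The core step is to feed in the representation of $G_1$ obtained in~\cite{boxma_ivanovs_queue} and summarized in \S\ref{sec:WH}, where $G_1$ is written through the Wiener--Hopf factors of an auxiliary two-sided L\'evy process whose Laplace exponent is built from $\psi_1$, $\psi_2$ and the constants $\rho_1,\rho_2$. Tracking the dependence of this representation on the pair $(\rho_1,\rho_2)$, one wants to show that the relevant Wiener--Hopf factor for the two-way model splits, up to the $\rho$-free contribution, into a part depending only on $\rho_1$ and a part depending only on $\rho_2$. After inserting the normalizing constant $G_1(0)=(\mu_2+\rho_1\mu_1)/(1-\rho_1\rho_2)$ recorded before~\eqref{eq:transform} and cancelling the common kernel, the product identity above should reduce to the multiplicativity of Wiener--Hopf factors under the degenerations $\rho_2\to 0$ and $\rho_1\to 0$; the extra factor $\e e^{-sV_{0,0}}=\mu_1 s/\psi_1(s)$ on the left-hand side is exactly the trace of the $\rho$-free part, equivalently of the additional point mass at $0$ built into $V$ in~\eqref{eq:transform}, which is why the queueing decomposition carries this correction whereas the risk decomposition~\eqref{eq:dec_main} does not.

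The main obstacle is the bookkeeping: the auxiliary L\'evy processes attached to $(\rho_1,\rho_2)$, $(\rho_1,0)$ and $(0,\rho_2)$ are distinct, so one cannot simply multiply Wiener--Hopf factors of a single process, and one needs a separation-of-variables lemma for~\eqref{eq:kernel_queue} — parallel to the one underlying Theorem~\ref{thm:dec_risk} — that isolates the $\rho_1$- and $\rho_2$-dependence of the boundary function $G_1$. Verifying that this factorization is internally consistent across the degenerate limits $\rho_i\to 0$, and that it respects the stability assumptions~\eqref{eq:stability_queue}, is the delicate part. If the Wiener--Hopf computation turns out unwieldy, the route of \S\ref{sec:direct} offers an alternative: substitute the conjectured product form of the boundary transform into~\eqref{eq:kernel_queue} and apply the characterization of \S\ref{sec:characterization} — recovering two independent random variables from the law of their difference — to pin down the factors directly from the kernel equation, bypassing the full Wiener--Hopf solution.
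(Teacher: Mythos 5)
Your proposal matches the paper's proof: pass to Laplace transforms, insert the Wiener--Hopf representation of $\widehat G_1$, and read off the product identity, with the extra factor $\mu_1 s/\psi_1(s)$ being the transform of $V_{0,0}$ and arising from the $\rho$-free part, exactly as you anticipate. What makes the bookkeeping trivial --- and dissolves the obstacle you worry about --- is that \eqref{eq:GG} already gives $\widehat G_1^{\rho_1,\rho_2}(s)=\Psi_{\rho_2}^+(\psi_1(s))/\Psi_{1/\rho_1}^+(\psi_1(s))$ as a ratio of Wiener--Hopf factors of \emph{two separate} one-parameter auxiliary processes $X_{\rho_2}$ and $X_{1/\rho_1}$, so the $\rho_1$- and $\rho_2$-dependence is already separated multiplicatively, and the degenerate cases follow at once from the limits $\Psi^+_\infty(\cdot)=1$ and $\Psi^+_0(\theta)=\mu_1\Phi_1(\theta)/\theta$ recorded in \eqref{eq:WH_limits}.
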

%Note that this decomposition somewhat resembles~\eqref{eq:supp} in risk.
Let us comment on the assumed conditions. 
Stability of the system with rate pair $(\rho_1,0)$ implies that $\mu_1>0$, and similarly we get $\mu_2>0$, and so the assumption $\mu_1,\mu_2>0$ is necessary. 
Furthermore, for $\rho_1\rho_2>1$ the first term in~\eqref{eq:transform} becomes negative implying that $\widehat  G_1(s)$ can not be a transform of a random variable. %: note that $T_1(\infty)\in(1,\infty)$. 
Of course, there may exist decompositions for some other random quantity, which also works for $\rho_1\rho_2>1$, but we were not able to identify such.

\begin{rem}\label{rem:bm}
	Exactly the same decomposition as in Theorem~\ref{thm:dec_queue} is also true for the Brownian case, i.e., when $X_1$ and $X_2$ are two independent linear Brownian motions. The interpretation of $V$ is, however, different. Note that $\p(W_i=0)=0$ in this case.
	According to~\cite{boxma_ivanovs_queue} we have
	\[G_1(s)=\e^*\int_0^1 e^{-sW_1(t)}\D L_2(t),\]
	where $L_2$ is the regulator at~0 and $\e^*$ signifies that the system is started in stationarity. Hence we see that $G_1(s)/G_1(0)$ is a transform of a positive random variable whose law is given by
	\[\p(V\in B)=\e^*\int_0^1 1_B(W_1(t))\D L_2(t)/\e^*L_2(1).\]
	Note that this is exactly the face measure $\nu_2$ in the basic adjoint relationship, see e.g.~\cite[(6)]{dai_harrison}, rescaled to be a probability measure.
\end{rem}

\section{Proofs via reduction to Wiener-Hopf factors}\label{sec:proofs}
Proofs given in this section are based on the expressions of $F_1$ and $G_1$ provided in~\cite{boxma_ivanovs_queue} and~\cite{boxma_ivanovs_risk}, which are  in terms of the Wiener-Hopf factors of some auxiliary two-sided L\'evy process.
We write $x^\pm=\max(\pm x,0)$ for the positive/negative part of~$x$.

\subsection{Auxiliary process and its Wiener-Hopf factors}\label{sec:WH}
Let $\Phi_i(\theta),\theta>0$ be the unique positive inverse of $\psi_i$, which can be analytically continued to all complex $\theta$ with non-negative real part.
It is noted that $\Phi_i(0)$ is 0 or strictly positive according to $\mu_i\geq 0$ and $\mu_i<0$.
One of the main observations in~\cite{boxma_ivanovs_queue} is that for any $r\in(0,\infty)$
\[\psi_r(\theta)=-\frac{\theta}{\Phi_1(\theta)}+\frac{r\theta}{\Phi_2(-\theta)},\qquad \theta\in\ii\mathbb R\]
is the L\'evy exponent of some L\'evy process~$X_r(t)$, i.e., $\psi_r(\theta)=\log\e e^{-\theta X_r(1)}$.
This auxiliary process is killed at rate
\[k_r=-\psi_r(0)=\mu_1^++r\mu_2^+>0,\]
where the value at~0 is interpreted in the limiting sense.
Hence we may define the corresponding Wiener-Hopf factors 
\[\Psi_r^+(\theta)=\e e^{-\theta\sup_t\{X_r(t)\}},\qquad \Psi_r^-(\theta)=\e e^{-\theta\inf_t\{X_r(t)\}}\]
satisfying $\Psi^+_r(\theta)\Psi_r^-(\theta)=-k_r/\psi_r(\theta)$ for $\theta\in\ii\mathbb R$. In fact, in the following we will only need the first factor $\Psi^+_r(s)$ for $s>0$.

Let us remark that we can write $X_r(t)=Z_1(t)-Z_2(rt)$ with independent $Z_1$ and $Z_2$, where $Z_i$
is the drift-less compound Poisson process with positive jumps characterized by 
$\log \e e^{-\theta Z_i(1)}=-\theta/\Phi_i(\theta)$, and so it is killed at rate $\mu_i^+\geq 0$. In fact, $Z_i$ is the descending ladder time process of $X_i$, see~\cite[\S 6.5.2]{kyprianou}.

It is noted that in~\cite{boxma_ivanovs_queue} and~\cite{boxma_ivanovs_risk} two auxiliary L\'evy processes with slightly different representations where used.
Using one family of processes $X_r(t)$ instead made the final formulas cleaner and led to some further important observations.

\subsection{Expressions of the unknown functions}
The expressions for $G_i(s)$ and $F_i(s)$ appearing in the kernel equations~\eqref{eq:kernel_queue} and~\eqref{eq:kernel_risk} were identified in~\cite{boxma_ivanovs_queue} and \cite{boxma_ivanovs_risk}, respectively, using an educated-guess approach based on intuition from~\cite{cohen_boxma}, see also the latter work for an alternative expression of $G_i(s)$ based on the random walk theory. 
For the risk model with $r_i\in(0,\infty)$ and satisfying~\eqref{eq:stability} we have
  \begin{align}\label{eq:FF}
F_1(s)=\frac{\mu_1^+-r_1r_2\mu_1^-+r_2\mu_2}{\psi_1(s)+\psi_2(r_2s)}\frac{\Psi_{1/r_1}^+(\psi_1(s))}{\Psi_{r_2}^+(\psi_1(s))}, \qquad s>\Phi_1(0).
\end{align}
For the queueing model with $\rho_i\in(0,\infty)$, $\rho_1\rho_2\neq 1$ and satisfying~\eqref{eq:stability_queue} we have
 \begin{align}\label{eq:GG}
 G_1(s)=\frac{\mu_2+\rho_1\mu_1^++\mu_1^-/\rho_2}{1-\rho_1\rho_2}\frac{\Psi_{\rho_2}^+(\psi_1(s))}{\Psi_{1/\rho_1}^+(\psi_1(s))}, \qquad s>\Phi_1(0).
\end{align}
Note that in the case $\mu_1>0$ it must be that $\Phi_1(0)=0$ showing again that $G_1(0)=(\mu_2+\rho_1\mu_1)/(1-\rho_1\rho_2)$.

\subsection{Proofs}
Key observation is that the Wiener-Hopf factors defined in \S\ref{sec:WH} have a very simple limiting form when $r\uparrow \infty$ and $r\downarrow 0$.
\begin{lemma}For any $s\neq 0$ with $\Re(s)\geq 0$ it holds that
\begin{align}\label{eq:WH_limits}&\Psi^+_\infty(s)=1,&\Psi^+_0(s)=\mu^+_1\Phi_1(s)/s.\end{align}
Moreover, for $\mu_1\leq 0,\mu_2>0$ we also have 
\begin{equation}\label{eq:limit}\lim_{r\downarrow 0}\Psi^+_r(s)/r=\mu_2\Phi_1(s)/s.\end{equation}
\end{lemma}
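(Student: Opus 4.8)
The plan is to recall the definition $\psi_r(\theta)=-\theta/\Phi_1(\theta)+r\theta/\Phi_2(-\theta)$ on $\theta\in\ii\R$ and track what happens to the associated Lévy process $X_r(t)=Z_1(t)-Z_2(rt)$ and its supremum $\sup_t X_r(t)$ as $r$ varies, since $\Psi^+_r(s)=\e e^{-s\sup_t X_r(t)}$. The three claims correspond to three regimes, and I would treat the limits of the factors by a combination of (a) direct probabilistic identification of the limiting process and its supremum and (b) the Wiener-Hopf identity $\Psi^+_r(\theta)\Psi^-_r(\theta)=-k_r/\psi_r(\theta)$ together with known limiting forms of one of the two factors.

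For $\Psi^+_\infty(s)=1$: as $r\to\infty$, the process $X_r=Z_1-Z_2(r\,\cdot)$ has its negative component $-Z_2(rt)$ speeding to $-\infty$, so intuitively the supremum is attained at $t=0$, i.e.\ $\sup_t X_r(t)\to 0$. To make this rigorous I would use the fact that $Z_2$ is a driftless subordinator (descending ladder time process of $X_2$), so $Z_2(rt)\ge 0$ and $Z_2(rt)\to\infty$ a.s.\ for every fixed $t>0$ as $r\to\infty$; combined with $Z_1$ having killing rate $\mu_1^+$ and hence a proper or killed supremum, monotone/dominated convergence in the representation $\sup_t X_r(t)=\sup_{t\ge 0}(Z_1(t)-Z_2(rt))$ gives $\sup_t X_r(t)\downarrow Z_1(0)=0$. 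Alternatively, on $\theta\in\ii\R$, $\psi_r(\theta)/r\to\theta/\Phi_2(-\theta)$ and $k_r/r\to\mu_2^+$, so $\Psi^-_r(\theta)=(-k_r/\psi_r(\theta))/\Psi^+_r(\theta)\to(-\mu_2^+\Phi_2(-\theta)/\theta)/\Psi^+_\infty(\theta)$; independently one checks $\Psi^-_\infty$ is the factor of $-Z_2$ (which is a negative-jump process, pure ``downward'' part), and matching forces $\Psi^+_\infty\equiv 1$.

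For $\Psi^+_0(s)=\mu_1^+\Phi_1(s)/s$: as $r\downarrow 0$ the process $X_r$ converges to $Z_1$ (the $Z_2(rt)$ term vanishes), so $\sup_t X_r(t)\to\sup_t Z_1(t)$ and $\Psi^+_0(s)=\e e^{-s\sup_t Z_1(t)}$. But $Z_1$ is the descending ladder time process of $X_1$, a subordinator killed at rate $\mu_1^+$; if $\mu_1\le 0$ it is not killed, diverges to $+\infty$, its supremum is $+\infty$ and the transform is $0=\mu_1^+\Phi_1(s)/s$; if $\mu_1>0$ it is killed, and $\sup_t Z_1(t)=Z_1(\zeta-)$ at the killing time, whose transform is exactly the Pollaczek-Khinchine-type quantity $\mu_1\Phi_1(s)/s$ (this is the standard relation $\e e^{\theta\uX_1}=\mu_1\theta/\psi_1(\theta)$ combined with $\theta=\psi_1(s)\leftrightarrow\Phi_1(\theta)=s$). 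One small subtlety: interchanging $\sup_t$ and $r\downarrow 0$ needs justification, but monotonicity of $r\mapsto Z_2(rt)$ and continuity of paths of $Z_1$ make this routine via monotone convergence applied pathwise.

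For~\eqref{eq:limit} in the regime $\mu_1\le 0,\mu_2>0$: here the plain limit $\Psi^+_0(s)=\mu_1^+\Phi_1(s)/s=0$ is degenerate, so the content is the first-order behavior. I would again use the Wiener-Hopf identity: on $\theta\in\ii\R$, $\Psi^+_r(\theta)\Psi^-_r(\theta)=-k_r/\psi_r(\theta)$ with $k_r=\mu_1^++r\mu_2^+=r\mu_2$ (since $\mu_1\le 0$) and $\psi_r(\theta)\to\psi_0(\theta)=-\theta/\Phi_1(\theta)$, while $\Psi^-_r(\theta)\to\Psi^-_0(\theta)=\e e^{-\theta\inf_t Z_1(t)}$. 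Since $Z_1$ is a non-killed subordinator when $\mu_1\le0$, $\inf_t Z_1(t)=Z_1(0)=0$ and $\Psi^-_0(\theta)=1$. Therefore $\Psi^+_r(\theta)/r=(-k_r/r)/(\psi_r(\theta)\Psi^-_r(\theta))\to\mu_2/(-\psi_0(\theta))=\mu_2\Phi_1(\theta)/\theta$, and then continuity/analyticity extends from $\theta\in\ii\R$ to $\Re(s)\ge0$, $s\ne0$. The main obstacle I anticipate is not any single computation but the careful justification of the interchange of limits (in $r$) with the suprema/infima of the Lévy paths, and correctly handling the killing: one must keep track of whether the limiting ladder processes are killed (case $\mu_1>0$) or not (case $\mu_1\le0$), since this is precisely what distinguishes the three stated formulas; the Wiener-Hopf identity is the clean tool that sidesteps delicate path arguments by reducing everything to identifying the ``easy'' factor $\Psi^-$ in each regime.
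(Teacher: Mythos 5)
Your arguments for $\Psi^+_\infty(s)=1$ and for $\Psi^+_0(s)=\mu_1^+\Phi_1(s)/s$ are essentially the paper's: identify $X_r(t)=Z_1(t)-Z_2(rt)$, observe that $Z_2(rt)\to\infty$ (resp.\ $\to 0$) pathwise as $r\to\infty$ (resp.\ $r\downarrow 0$), and use that $Z_1$ is killed at rate $\mu_1^+$ to evaluate the limiting supremum; the paper splits the $r\to\infty$ case into $\mu_2>0$ (killing epoch $\to 0$) and $\mu_2\le 0$ ($Z_2$ unkilled, $Z_2(r\epsilon)\to\infty$), which is cleaner than appealing to the post-killing convention $Z_2=+\infty$, but the substance is the same. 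Your ``alternative'' route via the Wiener--Hopf identity for $\Psi^+_\infty$ is circular as written: you would still need to determine $\Psi^-_\infty$ by some independent argument, which is the same work.

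The real difference is in the third limit, and here there is a gap. The paper proves \eqref{eq:limit} directly from the sandwich $Z_1(e_{\mu_2}/r)-Z_2(e_{\mu_2})\le\overline X_r\le Z_1(e_{\mu_2}/r)$ and an explicit Laplace transform computation, which yields the limit pointwise for every $s$ with $\Re s\ge 0$, $s\ne 0$, directly. You instead use the Wiener--Hopf identity $\Psi^+_r(\theta)\Psi^-_r(\theta)=-k_r/\psi_r(\theta)$ together with $\Psi^-_r(\theta)\to 1$. That reduction is conceptually attractive, but it only gives you the limit on $\theta\in\ii\R$, since that is the only place where $\Psi^+_r$ and $\Psi^-_r$ are simultaneously defined and the factorization holds. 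You then assert that ``continuity/analyticity extends from $\theta\in\ii\R$ to $\Re(s)\ge 0$, $s\ne 0$''; this does not follow. Pointwise convergence of analytic functions on the boundary line does not imply convergence in the half-plane, and the standard rescue (Vitali/Montel) fails here because the family $\Psi^+_r(s)/r$ is not locally bounded near $s=0$ (indeed $\Psi^+_r(0)/r=1/r\to\infty$), and the limit function $\mu_2\Phi_1(s)/s$ itself blows up there. Note also that the application of the lemma in the proof of Theorem~\ref{thm:dec_risk} requires \eqref{eq:limit} for real $s>\Phi_1(0)$, precisely the regime your argument does not reach. Additionally, the step $\Psi^-_r(\theta)\to 1$ itself requires a short pathwise argument (that $\inf_t X_r(t)\to 0$ a.s.\ as $r\downarrow 0$), which you leave implicit; it is true but not ``routine monotone convergence''. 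To repair your proof without reverting to the paper's sandwich, you would need either a uniform integrability/tightness argument for the (infinite) measures $r^{-1}\p(\overline X_r\in\cdot)$ that upgrades convergence on $\ii\R$ to convergence of Laplace transforms on $\Re s>0$, or a direct bound establishing local boundedness of $\Psi^+_r(s)/r$ away from $s=0$ so that Vitali's theorem applies.
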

\begin{proof}
If $\mu_2> 0$ then $k_r\to \infty$ as $r\to \infty$ implying that the killing epoch approaches~0. Moreover, the supremum of $X_r(t)$ is bounded by $Z_1$ at the killing time and so $\Psi^+_\infty(s)=1$. If $\mu_2\leq 0$ then the killing rate stays constant, whereas $Z_2(r\epsilon)\to\infty$ a.s. for any $\epsilon>0$ implying the same result.

As $r\downarrow 0$ the killing rate approaches $\mu_1^+$, whereas $X_r(t)\to Z_1(t)$ for every $t>0$. Thus for $\mu_1\leq 0$ the supremum tends to $\infty$ yielding $\Psi^+_0(s)=0$, and for $\mu_1>0$ the supremum becomes $Z_1$ evaluated at its killing time of rate $\mu_1$ and so 
\[\Psi^+_\infty(s)=\int_0^\infty e^{(-s/\Phi_i(s)+\mu_1)t}\mu_1e^{-\mu_1t}\D t=\mu_1\Phi_1(s)/s.\]

Finally, consider the case $\mu_1\leq 0$ and note that 
\[\overline X_r:=\sup_t\{Z_1(t)-Z_2(rt)\}=\sup_t\{Z_1(t/r)-Z_2(t)\},\] where $Z_1$ has infinite life-time and $Z_2$ is killed at an independent exponential time $e_{\mu_2}$ of rate $\mu_2>0$. Observe that $Z_1(e_{\mu_2}/r)-Z_2(e_{\mu_2})\leq \overline X_r\leq Z_1(e_{\mu_2}/r)$ and
\[\e e^{-sZ_1(e_{\mu_2}/r)}/r=\e e^{-s/\Phi_1(s)e_{\mu_2}/r}/r=\frac{\mu_2}{s/\Phi_1(s)+r\mu_2}\to \mu_2\Phi_1(s)/s\qquad \text{as }r\downarrow 0.\]
A similar calculation shows that the transform of the lower bound has the same limit, which concludes the proof.
\end{proof}

\begin{proof}[Proof of Theorem~\ref{thm:dec_risk}]
First, consider the case $\mu_1>0$.
Writing $\widehat F^{r_1,r_2}_1(s)$ to stress the dependence of the transform $\widehat F_1(s)$ on the rates $r_1,r_2$, we find from~\eqref{eq:FF} and~\eqref{eq:WH_limits} that 
\begin{align*}
\widehat F^{0,r_2}_1(s)&=\frac{\mu_1^++r_2\mu_2}{\psi_1(s)+\psi_2(r_2s)}\frac{s}{\Psi_{r_2}^+(\psi_1(s))}, &\widehat F^{r_1,0}_1(s)&=\Psi_{1/r_1}^+(\psi_1(s))
\end{align*}
for $s>\Phi_1(0)$, where the continuity of $\widehat F^{r_1,r_2}_1(s)$ at $r_1=0$ and at $r_2=0$ is straightforward to check. Thus we indeed have
\begin{equation}\label{eq:product}\widehat F^{0,r_2}_1(s) \widehat F^{r_1,0}_1(s)=\widehat F^{r_1,r_2}_1(s)\end{equation}
proving~\eqref{eq:dec_main}.
When $\mu_1\leq 0$ we get with the help of~\eqref{eq:limit}
\[\widehat F^{r_1,0}_1(s)=s\frac{r_1\mu_1+\mu_2}{\psi_1(s)}\Psi_{1/r_1}^+(\psi_1(s))\lim_{r_2\downarrow 0}\frac{r_2}{\Psi_{r_2}^+(\psi_1(s))}=\frac{r_1\mu_1+\mu_2}{\mu_2}\Psi_{1/r_1}^+(\psi_1(s)),\] whereas $\widehat F^{0,r_2}_1(s)$ has the same form as above. Thus~\eqref{eq:product} holds and implies the required decomposition also when $\mu_1\leq 0$.

The identity~\eqref{eq:supp1} has been already proven in \S\ref{sec:law_inv} assuming~\eqref{eq:dec_main}, 
but it also follows from
\[\widehat F^{r_1,\infty}_1(s)=\frac{-r_1\mu_1^-+\mu_2}{c_2}s\Psi_{1/r_1}^+(\psi_1(s))=\widehat F^{r_1,0}_1(s)\frac{\mu_2}{c_2},\]
because $\psi_2(s)/s\to c_2$ as $s\to\infty$. It is left to note that  $\widehat F^{r_1,\infty}_1(s)$ is the transform of the defective random variable $U^{r_1,\infty}$, where
the probability of $\{U^{r_1,\infty}\in [0,\infty)\}=\{\underline X_2=0\}$ is $\mu_2/c_2$.

Finally, with respect to~\eqref{eq:supp2} we have
\[\widehat F^{\infty,r_2}_1(s)=\widehat F^{0,r_2}_1(s)\frac{\mu_1 s}{\psi_1(s)}\]
and the result follows, because the latter factor is the transform of $-\underline X_1=U_{\infty,0}$ according to the generalized Pollaczek-Khinchine formula.
\end{proof}

\begin{proof}[Proof of Theorem~\ref{thm:dec_queue}]
Under the assumed conditions we have, according to~\eqref{eq:GG},
\[\widehat G^{\rho_1,\rho_2}_1(s)=\frac{\Psi_{\rho_2}^+(\psi_1(s))}{\Psi_{1/\rho_1}^+(\psi_1(s))}.\]
Using~\eqref{eq:WH_limits} we find that
\[\widehat G^{\rho_1,0}_1(s)=\frac{\mu_1 s}{\psi_1(s)}\frac{1}{\Psi_{1/\rho_1}^+(\psi_1(s))},\qquad \widehat G^{0,\rho_2}_1(s)=\Psi_{\rho_2}^+(\psi_1(s))\]
and the decomposition result follows by noting that $\mu_1 s/\psi_1(s)$ is the transform of the stationary workload in the first stand-alone queue (generalized Pollaczek-Khinchine formula).
\end{proof}
Note that we have exactly the same factorization in the case $\rho_1\rho_2>1$, but then the probabilistic interpretation of $\widehat G^{\rho_1,\rho_2}_1(s)$ is lost.
Even more interestingly, $\widehat G^{\rho_1,\rho_2}_1(s)=\widehat G^{\rho_1,\infty}_1(s)\widehat G^{0,\rho_2}_1(s)$, but the first term on the right can not be a transform of a random variable for any $\rho_1\in(0,\infty)$.

\section{Observing decompositions directly}\label{sec:direct}
In this section we provide an alternative, more direct, way to establish the decompositions stated in Theorem~\ref{thm:dec_risk} and Theorem~\ref{thm:dec_queue}, which avoids identification of the transforms $\widehat F_1(s),\widehat G_1(s)$ in terms of Wiener-Hopf factors.
This approach, however, relies on certain properties of the transforms  and requires some additional theory. It is presented here mainly because of its methodological interest. Moreover, it can be seen as a simple heuristic tool for discovering stochastic decompositions in other models.

\subsection{Distributional equality of differences}
Note that the kernel equations~\eqref{eq:kernel_risk} and~\eqref{eq:kernel_queue} have a somewhat similar form:
\[(\psi_1(s_1)+\psi_2(s_2))\times \cdot=A_1(s_1,s_2)T_1(s_1)+A_2(s_1,s_2)T_2(s_2),\]
where $T_i$ is the transform of interest $\widehat F_i$ or $\widehat G_i$, and $A_1(s_1,s_2)$ is
\[\frac{\psi_2(s_2)-\psi_2(r_2s_1)}{s_1(s_2-r_2 s_1)}\qquad\text{ or }\qquad(\mu_2+\rho_1\mu_1)(s_2-\rho_2s_1).\]
Proceeding as in~\cite{cohen_boxma,boxma_ivanovs_queue} we let $s_1=\Phi_1(\theta),s_2=\Phi_2(-\theta)$ with non-zero $\theta\in\ii\mathbb R$ to annihilate the left hand side and to obtain the identity:
\[A_1(s_1,s_2)T_1(s_1)=-A_2(s_1,s_2)T_2(s_2)\]
for the given choice of $s_1,s_2$ dependent on $\theta$.

From now on we exclusively focus on the risk problem, because the queueing problem can be treated in a very similar way.
Firstly, we have 
\[A^{0,r_2}_1(s_1,s_2)A^{r_1,0}_1(s_1,s_2)=A^{r_1,r_2}_1(s_1,s_2)\frac{\psi_2(s_2)}{s_1s_2}\]
Assuming that $s_2\neq r_2s_1$ and $s_1\neq r_1s_2$ we readily obtain the identity
\begin{equation}\label{eq:transform_product}T^{r_1,r_2}_1(s_1)\left(T^{0,r_2}_2(s_2)T^{r_1,0}_2(s_2)\right)=\left(T^{0,r_2}_1(s_1)T^{r_1,0}_1(s_1)\right)T^{r_1,r_2}_2(s_2),\end{equation}
because $\psi_2(s_2)=-\theta=-\psi_1(s_1)$ and the cancelled out factors must be non-zero. Importantly, \eqref{eq:transform_product} holds for all $\theta\in\ii\R$, which follows by continuity and the fact that the set of excluded $\theta$ must not contain an interval of $\ii\R$, see e.g.~\cite{boxma_ivanovs_queue} for the properties of $\Phi_i(\theta)$.
%Moreover, the points $\theta$ satisfying $\Phi_2(-\theta)=r_2\Phi_1(\theta)$ must also satisfy $\psi_2(r_2\Phi_1(\theta))+\theta=0$ and thus can not have an accumulation point, since otherwise the latter identity is true

It is well known that $\e e^{-\theta \tau_i(x)}=e^{-\Phi_i(\theta)x}$, where $\tau_i(x)=\inf\{t\geq 0:X_i(t)>x\}$ is the first passage time of the $i$th process over $x\geq 0$.
Hence $T^{r_1,r_2}_1(\Phi_1(\theta))$ is the transform of a non-negative random variable $\tau_1(U'_{r_1,r_2})$, and similarly $T^{r_1,r_2}_2(\Phi_2(-\theta))$ is the transform of a non-positive random variable. Thus \eqref{eq:transform_product} translates into
\begin{equation}\label{eq:diff}Z_1-Z_2\eqd Z_1'-Z_2',\end{equation}
where $(Z_1,Z_2)$ and $(Z_1',Z_2')$ are two pairs of independent non-negative random variables. It is left to show that $Z_1\eqd Z_1'$ since then 
the decomposition~\eqref{eq:dec_main} follows immediately.
It is not true in general, however, that~\eqref{eq:diff} implies $Z_1\eqd Z_1'$, and we address this question in the following.

\subsection{Characterization of two independent non-negative random variables by their difference}\label{sec:characterization}%A characterization result}
The noteworthy result of~\cite{kotlarski} (see also~\cite{kagan_linnik_rao}) states that the laws of three independent real random variables $Z_1,Z_2,Z_3$ are characterized up to a change of location by the joint law of two differences $(Z_1-Z_2,Z_2-Z_3)$, whenever the characteristic function of the latter pair does not vanish. This result extends to $n\geq 3$ independent random variables but fails for~$n=2$. The obvious counter-examples are $Z_1-Z_2=(c-Z_2)-(c-Z_1)$ and $Z_1-(Z_2'+Z)=(Z_1-Z)-Z'_2$ with some constant $c$ and an independent~$Z$.
Importantly, by imposing some assumptions on $Z_i$ which exclude the above types of examples, we may still obtain a characterization result.
\begin{lemma}\label{lemma:char}
Consider two independent non-negative random variables $Z_1,Z_2$ and for $i=1,2$ assume that the law of $Z_i$ has a density on $(0,\infty)$ and a point mass at~0, and that $\e e^{\theta Z_i}\neq 0$ for all $\theta$ with $\Re(\theta)\leq 0$.
%\item light tails: $\e e^{\epsilon Z_1}<\infty,\e e^{\epsilon Z_2}<\infty$ for some $\epsilon>0$.
%\end{itemize}
Then the law of the difference $Z_1-Z_2$ characterizes the laws of $Z_i$ in the sense that 
\[Z_1-Z_2\eqd Z'_1-Z'_2\qquad \text{implies}\qquad Z_i\eqd Z'_i\] assuming that $Z_i'$ satisfy the above conditions.
\end{lemma}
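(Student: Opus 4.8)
The plan is to pass to Laplace/Fourier transforms and reduce the claim to a statement about analytic functions. Writing $f_i(\theta)=\e e^{\theta Z_i}$ and $g_i(\theta)=\e e^{\theta Z'_i}$ for $\Re(\theta)\le 0$, the hypothesis $Z_1-Z_2\eqd Z'_1-Z'_2$ together with independence gives $f_1(\theta)f_2(-\theta)=g_1(\theta)g_2(-\theta)$ for $\theta\in\ii\R$ (and, by analytic continuation, on the imaginary axis interpreted as characteristic functions). Rearranging, $h(\theta):=f_1(\theta)/g_1(\theta)=g_2(-\theta)/f_2(-\theta)$ on $\ii\R$. By assumption none of $f_1,g_1,f_2,g_2$ vanishes on the closed left half-plane, so $f_1/g_1$ extends to a zero-free holomorphic function on $\{\Re\theta<0\}$ that is bounded (each $|f_i|,|g_i|\le 1$ there, and I will need a lower bound, which is exactly where the non-vanishing hypothesis enters), while $g_2(-\cdot)/f_2(-\cdot)$ is holomorphic and bounded on $\{\Re\theta>0\}$. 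The two agree on the common boundary $\ii\R$, so they glue to a single bounded entire function $H$; by Liouville $H$ is constant, say $H\equiv c$. Evaluating at $\theta=0$ gives $c=f_1(0)/g_1(0)=1$, hence $f_1\equiv g_1$ and $f_2\equiv g_2$, i.e. $Z_i\eqd Z'_i$.

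In carrying this out the steps in order are: (1) record the transform identity on $\ii\R$ from $Z_1-Z_2\eqd Z'_1-Z'_2$; (2) argue that $1/g_1$ and $1/f_2$ are holomorphic on the appropriate open half-planes — this is where the density-on-$(0,\infty)$ assumption helps, guaranteeing that $f_i$ is genuinely holomorphic and not merely continuous on the boundary, and the non-vanishing assumption is what makes the reciprocals holomorphic; (3) verify the matching and the boundary behaviour needed for a gluing/edge-of-the-wedge or Morera argument, obtaining an entire function; (4) establish global boundedness of this entire function; (5) apply Liouville and normalise at $\theta=0$.

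The main obstacle I expect is step (4): producing a genuine uniform bound on $f_1/g_1$ on the left half-plane and on $g_2/f_2$ on the right half-plane. The numerators are bounded by $1$ on the closed half-planes, but to bound the quotient I need $|g_1(\theta)|$ and $|f_2(\theta)|$ bounded away from $0$ uniformly, not just non-zero. Near $\theta=0$ this is fine by continuity, but as $|\theta|\to\infty$ along or near $\ii\R$ a characteristic function can decay to $0$, so the pointwise hypothesis ``$\e e^{\theta Z_i}\ne 0$'' does not by itself give a uniform lower bound. The atom at $0$ is the saving feature: since $\p(Z_i=0)>0$, we have $f_i(\theta)=\p(Z_i=0)+\e(e^{\theta Z_i};Z_i>0)$, and the second term tends to $0$ as $\Re(\theta)\to-\infty$ (and is small in a neighbourhood of $-\infty$ on the half-plane by dominated convergence and the Riemann–Lebesgue lemma on the density part), so $|f_i(\theta)|$ is bounded below outside a compact set; combined with continuity and non-vanishing on the compact part this yields the required uniform lower bound. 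I would phrase this carefully, possibly replacing the crude Liouville step by the observation that $f_1/g_1$, being holomorphic, zero-free and bounded on a half-plane with boundary values of modulus bounded below, is forced to be constant by a maximum-modulus argument applied to both $f_1/g_1$ and $g_1/f_1$.

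A remark worth including: the two hypotheses exactly rule out the classical counterexamples quoted before the lemma. The reflection $Z_1-Z_2=(c-Z_2)-(c-Z_1)$ is excluded because $c-Z_i$ is not supported on $[0,\infty)$ for a nondegenerate $Z_i$ with mass at $0$; and the shift ambiguity $Z_1-(Z'_2+Z)=(Z_1-Z)-Z'_2$ is excluded because convolving with $Z$ either destroys the atom at $0$ or, if $Z$ itself has an atom at $0$, merely reshuffles mass without changing the factorisation — the non-vanishing condition on $\e e^{\theta Z_i}$ then pins down the split uniquely, since a zero of a transform is precisely the obstruction exploited in Feller's counterexample on $\R$.
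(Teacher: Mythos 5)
Your proposal is correct and follows essentially the same route as the paper: rearrange the transform identity into $f_1(\theta)/g_1(\theta)=g_2(-\theta)/f_2(-\theta)$ on $\ii\R$, glue across the axis to an entire function, bound it using the atom at~$0$ together with a Riemann--Lebesgue estimate on the density part, then apply Liouville and normalise at $\theta=0$. One minor inaccuracy worth fixing: the density assumption plays no role in making $f_i$ holomorphic --- $\e e^{\theta Z_i}$ is automatically holomorphic on $\{\Re\theta<0\}$ for any nonnegative $Z_i$ --- its sole purpose is the Riemann--Lebesgue decay feeding the uniform lower bound, which you also invoke correctly.
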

\begin{proof}
Let us show that 
\begin{equation}\label{eq:limit2}\e e^{(\ii a-b) Z_j}\to p_j:=\p(Z_j=0)>0\qquad \text{as }|a|+b\to\infty\text{ with }b\geq 0.\end{equation} 
It is sufficient to prove that $\int_0^\infty e^{(\ii a-b) x} f(x)\D x\to 0$ for any density~$f$.
We may assume that $b\leq B$ and $|a|\to \infty$, because the case $b\to\infty$ is trivial.
As in the case of characteristic functions~\cite[Lem.\ XV.5.3]{feller}, it is enough to check that 
\[\int_u^v e^{(\pm\ii a-b)x}\D x=\frac{1}{a}\int_{ua}^{av} (\cos(x)\pm\ii\sin(x))e^{-xb/a}\D x\to 0\] for any $a>0$ and $v>u\geq 0$.
This follows from the fact that 
\[\max_{k\in[ua,va]}\left|\int_{k}^{k+2\pi} \cos(x)e^{-xb/a}\D x\right|\leq c(1-e^{-2\pi b/a})\to 0\]
and the same argument concerning the $\sin$ function.

Assuming that $Z_1-Z_2\eqd Z_1'-Z_2'$ and considering the transforms of both sides, we obtain
\begin{equation}\label{eq:imaginary}\e e^{\theta Z_1}/\e e^{\theta Z'_1}=\e e^{-\theta Z'_2}/\e e^{-\theta Z_2}\end{equation}
for purely imaginary~$\theta$.
We see that 
the right hand side is analytic in the right-half of the complex plane, and the left hand side is analytic in the left-half of the complex plane. Moreover, both sides are continuous and coincide on the imaginary axis. Hence one is analytic continuation of the other~\cite{lang}. According to~\eqref{eq:limit2} both sides are bounded in their respective half-planes, and so by Liouville's theorem~\cite{lang} this analytic function must be constant. By plugging in $\theta=0$ we find that the constant is $1$ and so $\e e^{\theta Z_i}=\e e^{\theta Z'_i}$ concluding the proof.

%In the light-tailed case~\eqref{eq:imaginary} extends to the strip $(-\epsilon,\epsilon)\times \ii\R$ and defines a meromorphic function.
%But according to~\eqref{eq:limit} such a meromorphic function has finitely many poles, and thus it must be a rational function $P(\theta)/Q(\theta)$, where the two polynomials $P,Q$ have the same degree~$n$ and distinct zeros. [cite Elias Stein and Rami Shakarchi, Complex Analysis ?] Let us show that necessarily $n=0$ implying that $\e e^{\theta Z_i}=\e e^{\theta Z'_i}$.
%
%Note that the zeros of $P(\theta)$ with non-positive real part are necessarily the zeros of the transform $\e e^{\theta Z_1}$, and those with non-negative real part are the zeros of $\e e^{-\theta Z_2'}$; the analogous statement is true for~$Q(\theta)$. In particular, $P$ and $Q$ have no real zeros. Moreover, all the zeros come in conjugate pairs and we may assume that $P$ and $Q$ have real coefficients.

\end{proof}

It is known that the set of zeros of the characteristic function of a non-negative random variable, say $Z_1$, has 0 Lebesgue measure. Thus it is impossible to modify the other characteristic functions in~\eqref{eq:imaginary} on this set while preserving the continuity. 
In conclusion, the idea of constructing counterexamples from~\cite{kotlarski} would not work, and
 it seems very likely that the assumption of Lemma~\ref{lemma:char} that the transforms are non-vanishing is redundant. 

Instead of considering non-negative random variables with a point mass at 0 and a density on $(0,\infty)$, we may assume that the left-most point of the support is 0 and the right-most is $\infty$. This again rules out the above mentioned obvious counterexamples, and it would be important to understand if this is sufficient (under minor further assumptions, say) for the characterization of the laws of $Z_i$ by the law of their difference.

The main hurdle in applying Lemma~\ref{lemma:char} in the setting of~\eqref{eq:diff} is checking the (possibly redundant) assumption that the transforms are non-vanishing. This is indeed true when $\mu_1,\mu_2>0$, as can be seen from the reduction~\eqref{eq:FF} to the Wiener-Hopf factors. With regard to the other assumptions, we clearly have the point mass at 0, and then it is sufficient to check that $U$ has a density on $(0,\infty)$ which is the same as $\phi(u,0)$ being differentiable.
In conclusion, one may use the direct approach as a heuristic tool in establishing stochastic decompositions, whereas its rigorous application requires verification of various technical details as well as availability of characterization results similar to Lemma~\ref{lemma:char}.

\section{Concluding remarks}
The above presented approaches to proving stochastic decompositions are mainly analytic. Identities, as simple as~\eqref{eq:dec_main}, however, are asking for direct probabilistic arguments.
Discovering such proofs would be highly important in understanding these and possibly other coupled models.
The only such argument the author could find after a prolonged time is the one underlying~\eqref{eq:dec_alt}.
In this regard note that a probabilistic argument for the law invariance statement~\eqref{eq:lawinv} would be sufficient as it and~\eqref{eq:dec_alt} imply~\eqref{eq:dec_main}.
A related challenge concerns finding other multivariate models yielding stochastic decomposition results.
%It may also be that certain expressions happen to match without any deep reason.

%Finally, we mention an application of~\eqref{eq:dec_main} to numerical approximation of the 4-parameter function $\phi_{r_1,r_2}(u_1,u_2)$.
%Note that Monte Carlo simulation for fixed parameter values is simple, when considering finite time horizon approximation.
%The problem arises from the explosion of required simulations when discretizing the domain.  
%Note that $r_1,r_2$ play no role until one company requires help of the other, and at that time we have $u_1=0$ or $u_2=0$.
%Hence the main problem is to compute the 3-parameter function $\phi_{r_1,r_2}(u_1,0)$ and its analogue.
%Here we can use~\eqref{eq:dec_main} to calculate this function by convolving $\phi_{r_1,0}(\cdot,0)$ and $\phi_{0,r_1}(\cdot,0)$.

\section*{Acknowledgments}
This work is dedicated to Peter Taylor in appreciation of his contributions in applied probability.
The author is thankful to Stella Kapodistria and Offer Kella for their interest in the problem and insightful remarks. 
Support of Sapere Aude Starting grant is gratefully acknowledged.

%\bibliographystyle{abbrv}
%\bibliography{two_dim}

\begin{thebibliography}{10}
	
	\bibitem{ruin}
	S.~Asmussen and H.~Albrecher.
	\newblock {\em Ruin probabilities}, volume~14 of {\em Advanced Series on
		Statistical Science \& Applied Probability}.
	\newblock World Scientific Publishing Co. Pte. Ltd., Hackensack, NJ, second
	edition, 2010.
	
	\bibitem{polling}
	S.~Borst and O.~Boxma.
	\newblock Polling: past, present, and perspective.
	\newblock {\em TOP}, 26(3):335--369, 2018.
	
	\bibitem{boxma_ivanovs_queue}
	O.~Boxma and J.~Ivanovs.
	\newblock Two coupled {L}\'{e}vy queues with independent input.
	\newblock {\em Stoch. Syst.}, 3(2):574--590, 2013.
	
	\bibitem{cohen_boxma}
	J.~W. Cohen and O.~J. Boxma.
	\newblock {\em Boundary Value Problems in Queueing System Analysis}, volume~79
	of {\em North-Holland Mathematics Studies}.
	\newblock North-Holland Publishing Co., Amsterdam, 1983.
	
	\bibitem{dai_harrison}
	J.~G. Dai and J.~M. Harrison.
	\newblock Reflected {B}rownian motion in an orthant: numerical methods for
	steady-state analysis.
	\newblock {\em Ann. Appl. Probab.}, 2(1):65--86, 1992.
	
	\bibitem{feller}
	W.~Feller.
	\newblock {\em An introduction to probability theory and its applications.
		{V}ol. {II}}.
	\newblock John Wiley \& Sons, Inc., New York-London-Sydney, 1966.
	
	\bibitem{fuhrmann_cooper}
	S.~W. Fuhrmann and R.~B. Cooper.
	\newblock Stochastic decompositions in the {$M/G/1$} queue with generalized
	vacations.
	\newblock {\em Oper. Res.}, 33(5):1117--1129, 1985.
	
	\bibitem{harrison_williams}
	J.~M. Harrison and R.~J. Williams.
	\newblock Brownian models of open queueing networks with homogeneous customer
	populations.
	\newblock {\em Stochastics}, 22(2):77--115, 1987.
	
	\bibitem{boxma_ivanovs_risk}
	J.~Ivanovs and O.~Boxma.
	\newblock A bivariate risk model with mutual deficit coverage.
	\newblock {\em Insurance Math. Econom.}, 64:126--134, 2015.
	
	\bibitem{ivanovs_kella}
	J.~Ivanovs and O.~Kella.
	\newblock Another look into decomposition results.
	\newblock {\em Queueing Syst.}, 75(1):19--28, 2013.
	
	\bibitem{kagan_linnik_rao}
	A.~M. Kagan, Y.~V. Linnik, and C.~R. Rao.
	\newblock {\em Characterization problems in mathematical statistics}.
	\newblock John Wiley \& Sons, New York-London-Sydney, 1973.
	\newblock Translated from the Russian by B. Ramachandran, Wiley Series in
	Probability and Mathematical Statistics.
	
	\bibitem{kella_reflecting}
	O.~Kella.
	\newblock Reflecting thoughts.
	\newblock {\em Statist. Probab. Lett.}, 76(16):1808--1811, 2006.
	
	\bibitem{kotlarski}
	I.~Kotlarski.
	\newblock On characterizing the gamma and the normal distribution.
	\newblock {\em Pacific J. Math.}, 20:69--76, 1967.
	
	\bibitem{kyprianou}
	A.~E. Kyprianou.
	\newblock {\em Fluctuations of {L}\'{e}vy processes with applications}.
	\newblock Universitext. Springer, Heidelberg, second edition, 2014.
	\newblock Introductory lectures.
	
	\bibitem{reed_zwart}
	P.~Lakner, J.~Reed, and B.~Zwart.
	\newblock On the roughness of the paths of {RBM} in a wedge.
	\newblock {\em Ann. Inst. Henri Poincar\'{e} Probab. Stat.}, 55(3):1566--1598,
	2019.
	
	\bibitem{lang}
	S.~Lang.
	\newblock {\em Complex analysis}, volume 103 of {\em Graduate Texts in
		Mathematics}.
	\newblock Springer-Verlag, New York, fourth edition, 1999.
	
\end{thebibliography}

\end{document}